\numberwithin{equation}{section}
\newcommand{\bbold}{\mathbb}
\def\R { {\bbold R} }
\def\Q { {\bbold Q} }
\def\N { {\bbold N} }
\def\T { {\bbold T} }
\def \ex{\operatorname{e}}
\renewcommand\epsilon{\varepsilon}
\def \<{\langle}
\def \>{\rangle}
\def \((  {(\!(}
\def \)) {)\!)}
\DeclareMathSymbol{\precequ}{\mathrel}{symbols}{"16}
\DeclareMathSymbol{\succequ}{\mathrel}{symbols}{"17}
\def \nasymp{\not\asymp}
\newtheorem{theorem}{Theorem}[section]
\newtheorem{lemma}[theorem]{Lemma}
\newtheorem{cor}[theorem]{Corollary}
\theoremstyle{definition}
\theoremstyle{remark}
\let\oldi\i
\let\oldj\j
\renewcommand\i{\relax\ifmmode{\boldsymbol{i}}\else\oldi\fi}
\renewcommand\j{\relax\ifmmode{\boldsymbol{j}}\else\oldj\fi}
\renewcommand\leq{\leqslant}
\renewcommand\geq{\geqslant}
\renewcommand\preceq{\preccurlyeq}
\renewcommand\succeq{\succcurlyeq}
\renewcommand\le{\leq}
\renewcommand\ge{\geq}
\DeclareMathAlphabet{\mathbf}{OML}{cmm}{b}{it}
\DeclareFontFamily{U}{fsy}{}
\DeclareFontShape{U}{fsy}{m}{n}{<->s*[.9]psyr}{}
\DeclareSymbolFont{der@m}{U}{fsy}{m}{n}
\DeclareMathSymbol{\der}{\mathord}{der@m}{182}
\DeclareSymbolFont{der@m}{U}{fsy}{m}{n}
\DeclareMathSymbol{\derdelta}{\mathord}{der@m}{100}
\DeclareSymbolFont{imag@m}{OT1}{cmr}{m}{ui}
\DeclareMathSymbol{\imag}{\mathord}{imag@m}{105}
\DeclareFontFamily{OMS}{smallo}{}
\DeclareFontShape{OMS}{smallo}{m}{n}{<->s*[.65]cmsy10}{}
\DeclareSymbolFont{smallo@m}{OMS}{smallo}{m}{n}
\DeclareMathSymbol{\smallo}{\mathord}{smallo@m}{79}
\DeclareFontFamily{OMS}{largerdot}{}
\DeclareFontShape{OMS}{largerdot}{m}{n}{<->s*[.8]cmsy10}{}
\DeclareSymbolFont{largerdot@m}{OMS}{largerdot}{m}{n}
\DeclareMathSymbol{\largerdot}{\mathord}{largerdot@m}{15}
\DeclareMathSymbol{\llambda}{\mathord}{der@m}{108}
\DeclareMathSymbol{\rrho}{\mathord}{der@m}{114}
\def \Upg{\Upgamma}
\def \upl{\uplambda}
\def \Upl{\Uplambda}
\def \upo{\upomega}
\newcommand{\equationqed}[1]{\[\pushQED{\qed}#1 \qedhere\popQED\]\let\qed\relax}
\newcommand{\alignqed}[1]{\begin{align*}\pushQED{\qed} #1 \qedhere\popQED\end{align*}\let\qed\relax}
\newcommand{\dminus}{\mathbin{\text{\@dminus}}}
\newcommand{\@dminus}{%
  \ooalign{\hidewidth\raise1ex\hbox{\bf.}\hidewidth\cr$\m@th-$\cr}%
}
\def \C{\mathcal{C}}
\def \Calinf{\mathcal{C}^{<\infty}}
\def \Calr{\mathcal{C}^{r}}
\begin{document}

\title{On a Differential Intermediate Value Property}

\author[Aschenbrenner]{Matthias Aschenbrenner}
\address{Kurt G\"odel Research Center for Mathematical Logic\\
Universit\"at Wien\\
1090 Wien\\ Austria}
\email{matthias.aschenbrenner@univie.ac.at}

\author[van den Dries]{Lou van den Dries}
\address{Department of Mathematics\\
University of Illinois at Urbana-Cham\-paign\\
Urbana, IL 61801\\
U.S.A.}
\email{vddries@math.uiuc.edu}

\author[van der Hoeven]{Joris van der Hoeven}
\address{CNRS, LIX, \'Ecole Polytechnique\\
91128 Palaiseau Cedex\\
France}
\email{vdhoeven@lix.polytechnique.fr}

\begin{abstract} Liouville closed $H$-fields are ordered differential fields whose ordering and derivation interact in a natural way and where every linear differential equation of order $1$ has a nontrivial solution. (The introduction gives a precise definition.) For a Liouville closed $H$-field $K$ with small derivation we show:
$K$ has the Intermediate Value Property for differential polynomials iff $K$ is elementarily equivalent to the ordered differential field of transseries. We also indicate how this applies to Hardy fields.   
\end{abstract}

\thanks{The first-named author was partially supported by NSF Grant DMS-1700439. We thank Allen Gehret for commenting on an earlier version of this paper.}

\date{May 2021}

\maketitle

\section*{Introduction}\label{intro}

\noindent
Throughout this introduction $K$ is an ordered differential field, that is, an ordered field equipped with a derivation $\der\colon K \to K$. (We usually write $f'$ instead of $\der f$, for~$f\in K$.)  
 Its constant field $$C\ :=\ \{f\in K:\, f'=0\}$$ yields the (convex) valuation ring 
 $$\mathcal{O}\ :=\ \big\{f\in K:\,  \text{$|f|\le c$ for some $c\in C$}\big\}$$ of $K$, with maximal ideal 
 $$\smallo\ :=\ {\big\{f\in K:\,  \text{$|f|< c$ for all $c>0$ in $C$}\big\}}.$$
 (It may help to think of the elements of $K$ as germs of real valued functions and of~$f\in \mathcal{O}g$ and $f\in \smallo g$ as $f=O(g)$ and $f=o(g)$, respectively.) 
 The above definitions exhibit $C$, $\mathcal{O}$, and $\smallo$ as definable in $K$ in the sense of model theory. 

Key example: the ordered differential field $\T$ of {\bf transseries}, which contains $\R$ as an ordered subfield, and where $C=\R$. We refer to
~\cite{ADH} for the rather elaborate construction of $\T$ and for any fact about $\T$ that gets mentioned without proof. 

Other important examples are Hardy fields. (Hardy~\cite{Ha} proved a striking theorem on logarithmic-exponential functions. Bourbaki~\cite{Bou} put this into the general setting of what they called Hardy fields.) Here we can give a definition from scratch that doesn't take much space.
Notation: $\C$ is the ring of germs at $+\infty$ of continuous real-valued functions on halflines $(a,+\infty)$, $a\in \R$. For $r=1,2,\dots$, let
$\Calr$ be the subring of $\C$ consisting of the germs at $+\infty$ of $r$-times continuously differentiable real-valued functions on such halflines. This yields the subring
$$\Calinf\ :=\  \bigcap_{r\in \N^{\ge 1}} \Calr$$
of $\C$, and $\Calinf$ is naturally a {\em differential\/} ring. For a germ $f\in \C$ we let $f$ also denote any real valued function representing this germ,
if this causes no ambiguity. A real number is identified with the germ of the corresponding constant function: $\R\subseteq \C$.

\medskip\noindent
A {\bf Hardy field} is by definition a differential subfield of $\Calinf$. 
{\em Examples}: 
$$\quad \Q,\\ \quad \R,\quad \R(x),\quad \R(x, \ex^x),\quad \R(x, \ex^x, \log x),\quad \R(\Gamma, \Gamma', \Gamma'',\dots),$$
where $x$ denotes the germ at $+\infty$ of the identity function on $\R$. All these are actually {\em analytic\/} Hardy fields, that is, its elements are germs of real analytic functions.

Let $H$ be a Hardy field. Then $H$ is an {\em ordered\/} differential field:
for $f\in H$, either $f(x) >0$ eventually (in which case we set $f>0$), or $f(x)=0$, eventually,
or $f(x) < 0$, eventually; this is because $f\ne 0$ in $H$ implies $f$ has a multiplicative inverse in $H$, so $f$ cannot have arbitrarily large zeros. Also, if $f'<0$, then $f$ is eventually strictly decreasing; if $f'=0$, then $f$ is eventually constant; 
if $f'>0$, then $f$ is eventually strictly increasing.

\medskip\noindent
In order to state the main result of this paper we need a bit more terminology:  an {\bf $H$-field\/}  is a $K$ (that is, an ordered differential field) such that:  \begin{itemize}
\item for all $f\in K$, if $f> C$, then $f'>0$;
\item $\mathcal{O}= C + \smallo$ (so $C$ maps isomorphically onto the residue field $\mathcal{O}/\smallo$). 
\end{itemize}
We also say that  {\bf $K$ has small derivation\/} if for all $f\in \smallo$ we have $f'\in \smallo$. Hardy fields have small derivation, and any Hardy field containing $\R$ is an $H$-field.

An $H$-field $K$ is said to be
{\bf Liouville closed\/} if it is real closed and for every~$f\in K$ there are $g,h\in K^\times$ such that $f=g'=h'/h$. 
The ordered differential field $\T$ is a Liouville closed $H$-field with small derivation.
Any Hardy field $H\supseteq \R$ has a smallest (with respect to inclusion) Liouville closed
Hardy field extension $\text{Li}(H)$.  
(The notions of ``$H$-field'' and ``Liouville closed $H$-field'' are introduced in \cite{AvdD1}. The capital $H$ is in honor of Hardy, Hausdorff, and Hahn, who pioneered various aspects of our topic about a century ago, as did Du Bois-Reymond and Borel even earlier.)

Now a very strong property: we say $K$ {\bf has DIVP} (the Differential
Intermediate Value Property) if for every polynomial $P\in K[Y_0,\dots, Y_r]$ and all $f<g$ in $K$
with $$P(f, f',\dots, f^{(r)})\ <\ 0\  <\  P(g, g',\dots, g^{(r)})$$ there exists $y\in K$  such that 
$f<y<g$ and $P(y, y',\dots, y^{(r)})=0$.
(Existentially closed ordered differential fields have DIVP by~\cite{Singer} and ~\cite[Proposition~1.5]{Spodzieja}; this has limited interest for us since the ordering and derivation in those structures do not interact.) 
Actually, DIVP is a bit of an afterthought: in \cite{ADH} we considered instead two robust but rather technical
properties, $\upo$-freeness and newtonianity, and proved that $\T$ is $\upo$-free and newtonian. (One can think of newtonianity as a variant of differential-henselianity.) Afterwards we saw that ``$\upo$-free + newtonian'' is equivalent to DIVP, for Liouville closed
$H$-fields. Our aim is to establish this equivalence:  Theorem~\ref{DIVP}, the main result of this short paper.

\medskip\noindent
We did not consider DIVP in \cite{ADH}, but it is surely an appealing property and easier to grasp than the more fundamental notions of
$\upo$-freeness and newtonianity. (The latter make sense in a wider setting of valued differential fields where the valuation does not necessarily arise from an ordering, as is
the case for $H$-fields.) 

Besides \cite{ADH} we shall rely on~\cite{JvdH}, which focuses on a particular ordered differential subfield of $\T$, namely $\T_{\operatorname{g}}$, consisting of the so-called {\em grid-based\/} transseries; see also~\cite[Appendix~A]{ADH}. We summarize what we need from~\cite{JvdH} as follows:

\medskip
\noindent
{\em $\T_{\operatorname{g}}$ is a newtonian $\upo$-free Liouville closed $H$-field with small derivation, and $\T_{\operatorname{g}}$ has~\rm{DIVP}}. We alert the reader that the terms {\em newtonian\/} and
{\em $\upo$-free\/} do not occur in ~\cite{JvdH}, and that $\T_{\operatorname{g}}$ there is denoted by $\T$.

\medskip\noindent
We call attention to the fact that $K$ is a Liouville closed $H$-field iff $K\models \operatorname{LiH}$ for a set $\text{LiH}$ (independent of $K$) of sentences in the language
of ordered differential fields. Also, for $H$-fields, ``$\upo$-free'' is expressible by a single sentence in the language of ordered differential fields, and ``newtonian'' as well as ``DIVP" by a set of sentences in this language.
The reason that ``$\upo$-free + newtonian'' is central in ~\cite{ADH} are various theorems proved there, which are also relevant here. To state these theorems, we consider 
an $H$-field $K$ below as an 
$\mathcal{L}$-structure, where 
$$\mathcal{L}\ :=\ \{\,0,\,1,\, {+},\, {-},\, {\times},\, \der,\, {<},\, {\preceq}\,\}$$ is the language of ordered valued differential fields. The symbols~$0$,~$1$,~${+}$,~${-}$,~${\times}$,~$\der$,~$<$ name the usual primitives of $K$, and $\preceq$ encodes its valuation: 
for $a,b\in K$, 
$$ a\preceq b\quad :\Longleftrightarrow\quad a\in \mathcal{O} b.$$
We can now summarize what we need from~\cite[Chapters~15, 16]{ADH} as follows:

\medskip
\noindent
{\it The theory of  newtonian $\upo$-free Liouville closed $H$-fields is model complete, and is the model companion of the theory of $H$-fields. The theory of  newtonian $\upo$-free Liouville closed $H$-fields whose derivation is small is complete and has $\T$ as a model}.

\medskip\noindent
For an $H$-field $K$ its valuation ring $\mathcal{O}$ and so the binary relation $\preceq$ on $K$ can be defined in terms of the other primitives by an {\em existential\/} formula independent of $K$. However, by~\cite[Corollary 16.2.6]{ADH} this cannot be done by a universal such formula and so
for the model completeness above we cannot drop $\preceq$  from the language $\mathcal{L}$.

\begin{cor}\label{DIVP-} Every newtonian $\upo$-free Liouville closed $H$-field has \rm{DIVP}.
\end{cor}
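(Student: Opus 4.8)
The plan is to reduce to the case of small derivation by a compositional conjugation, and then to transfer DIVP from $\T_{\operatorname{g}}$ using the completeness statement recalled above.

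The first step is to observe that DIVP is invariant under compositional conjugation. Fix $\phi\in K^{\times}$ with $\phi>0$ and set $K^{\phi}:=(K,\phi\der)$. For each $s\in\N$ the tuples $(y,\phi\der y,\dots,(\phi\der)^{s}y)$ and $(y,y',\dots,y^{(s)})$ are related by an invertible $K$-linear substitution (lower triangular, with powers of $\phi$ on the diagonal, and coefficients in $K$ built from $\phi$, its derivatives, and $\phi^{-1}$). Hence for every differential polynomial $Q$ over $K$ in the derivation $\phi\der$ there is a differential polynomial $\widetilde{Q}$ over $K$ in $\der$ with $Q(y,\phi\der y,\dots)=\widetilde{Q}(y,y',\dots)$ for all $y$, and conversely; since the underlying ordered field is unchanged, $K$ has DIVP iff $K^{\phi}$ has DIVP.

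Next I would pass to small derivation. Since $K$ is Liouville closed, it is closed under integration and hence has asymptotic integration, so by \cite{ADH} there is $\phi\in K^{\times}$ with $\phi>0$ such that $K^{\phi}$ has small derivation. Moreover, compositional conjugation preserves being a Liouville closed $H$-field — immediately, since $\der$ and $\phi\der$ have the same constant field, the same valuation ring $\mathcal{O}$ and maximal ideal $\smallo$, and the maps $K\to K$, $g\mapsto g'$ and $K^{\times}\to K$, $h\mapsto h'/h$ remain surjective — and preserves $\upo$-freeness and newtonianity by \cite{ADH}. Thus $K^{\phi}$ is a newtonian $\upo$-free Liouville closed $H$-field with small derivation; so is $\T_{\operatorname{g}}$, which in addition has DIVP. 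Since the theory of newtonian $\upo$-free Liouville closed $H$-fields with small derivation is complete, $K^{\phi}\equiv\T_{\operatorname{g}}$ as $\mathcal{L}$-structures; and since DIVP is expressed by a set of $\mathcal{L}$-sentences, it passes from $\T_{\operatorname{g}}$ to $K^{\phi}$, and so, by the first step, to $K$.

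I expect the reduction to small derivation to be the only real obstacle. DIVP is a $\forall\exists$-property, so it is not transported for free along the embeddings and elementary extensions that the model-companion setup provides: for instance $\T^{\ex^{x}}=(\T,\ex^{x}\der)$ is a newtonian $\upo$-free Liouville closed $H$-field with large derivation that does not even embed, as an $\mathcal{L}$-structure, into any $H$-field of small derivation. So one cannot avoid moving to a compositional conjugate, and this is exactly where one must invoke from \cite{ADH} both that compositional conjugation preserves newtonianity and $\upo$-freeness, and that asymptotic integration makes a conjugate of small derivation available; the rest is bookkeeping.
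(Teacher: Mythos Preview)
Your argument is correct and follows essentially the same route as the paper: reduce to small derivation by compositional conjugation (noting that DIVP, newtonianity, $\upo$-freeness, and being a Liouville closed $H$-field are all preserved), then invoke the completeness theorem together with the fact that $\T_{\operatorname{g}}$ has DIVP. The only discrepancy is notational---you write $K^{\phi}=(K,\phi\der)$ whereas the paper's convention is $K^{\phi}=(K,\phi^{-1}\der)$---but this is harmless.
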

\begin{proof} Let $K$ be a newtonian $\upo$-free Liouville closed $H$-field.
If the derivation of $K$ is small, then DIVP follows from the results from~\cite{JvdH} quoted earlier and the above completeness result from~\cite{ADH}. 
Suppose the derivation of $K$ is not small. Replacing the derivation $\der$ of $K$ by
a multiple $\phi^{-1}\der$ with $\phi>0$ in $K$ transforms $K$ into its so-called compositional conjugate $K^{\phi}$, which is still a newtonian $\upo$-free Liouville closed $H$-field, and $K$ has DIVP iff $K^{\phi}$ does.  By  4.4.7 and 9.1.5 in~\cite{ADH} we can choose~$\phi>0$ in $K$ such that the derivation $\phi^{-1}\der$ of $K^\phi$ is small. 
\end{proof}


\noindent
This gives one direction of Theorem~\ref{DIVP}. In the rest of this paper 
we prove a strong version, Corollary~\ref{LIVP}, of the other direction, without using \cite{JvdH} but relying heavily on various parts of \cite{ADH} with detailed
references. 
Theorem~\ref{DIVP} and the results quoted above from~\cite{ADH} yield the result stated in the abstract: a Liouville closed $H$-field with small derivation is elementarily equivalent to $\T$ iff it has DIVP.

\subsection*{Connection to Hardy fields} Every Hardy field $H$ extends to a Hardy field $H(\R)\supseteq \R$, and $H(\R)$ is in particular an $H$-field. We refer to~\cite{ADH2} for
a discussion of the conjecture that {\em any Hardy field containing $\R$ extends to a newtonian $\upo$-free Hardy field}. At the end of 2019  we finished the proof of this conjecture by considerably refining
material in~\cite{ADH} and~\cite{vdH:hfsol}; this amounts to a rather complete extension theory of Hardy fields.  
Note that every Hardy field extends to a maximal Hardy field, by Zorn, and
so having established this conjecture we now know that all maximal Hardy fields are elementarily equivalent to $\T$, as ordered differential fields. Since $\C$ has the cardinality $\mathfrak c=2^{\aleph_0}$ of the continuum, there are at most $2^{\mathfrak c}$ many maximal Hardy fields, and we also have a proof that there are exactly that many. (We thank Ilijas Farah for a useful hint on this point.) These remarks on Hardy fields serve as an announcement. A rather voluminous work containing the proof of the conjecture is currently being prepared for publication. We also hope to include there a proof of DIVP for newtonian $\upo$-free $H$-fields that does not depend as in the present paper on it being true for $\T_{\operatorname{g}}$, whose proof in~\cite{JvdH} uses the particular nature of $\T_{\operatorname{g}}$.

We have a second conjecture about Hardy fields in \cite{ADH2}, whose proof is not yet finished at this time (May 2021): {\em for any maximal Hardy field $H$ and countable subsets~$A< B$ in $H$ there exists $y\in H$ such that $A<y<B$}.  This means that the underlying ordered set of a maximal Hardy field is an $\eta_1$-set in the sense of Hausdorff. Together with the (now established) first conjecture and results from~\cite{ADH} it implies: {\em all maximal Hardy fields are back-and-forth equivalent as ordered differential fields,
and thus isomorphic assuming {\rm{CH}}, the  Continuum Hypothesis}. 


\section{Preliminaries}

\noindent
In order to make free use of the valuation-theoretic tools from~\cite{ADH} and
to make this paper self-contained modulo references to specific results from the literature we provide more background in this section before returning to DIVP.

\subsection*{Notation and terminology}
Throughout, $m$,~$n$ range over $\N=\{0,1,2,\dots\}$.
Given an additively written abelian group $A$ we let $A^{\ne}:=A\setminus\{0\}$.
 Rings are commutative with  identity $1$, and for a ring~$R$ we let $R^\times$ be the multiplicative group of units (consisting of the $a\in R$ such that $ab=1$ for some $b\in R$). A {\em differential ring\/} will be a ring $R$ containing
(an isomorphic copy of) $\Q$ as a subring and equipped with a derivation~$\der\colon R \to R$; note that then $C_R:=\big\{a\in R:\, \der(a)=0\big\}$ is a subring of $R$, called the ring of constants of $R$, and that $\Q\subseteq C_R$.  If $R$ is a field, then so is $C_R$.
An ordered differential field is in particular a differential ring.

Let $R$ be a differential ring and $a\in R$. When its derivation $\der$ is clear from the context we denote $\der(a),\der^2(a),\dots,\der^n(a),\dots$ by $a', a'',\dots, a^{(n)},\dots$, and if $a\in R^\times$, then $a^\dagger$ denotes $a'/a$, so $(ab)^\dagger=a^\dagger + b^\dagger$ for $a,b\in R^\times$. In Section~\ref{hf} we need to consider the function 
$\omega=\omega_R\colon R \to R$ given by $\omega(z)=-2z'-z^2$, and
the function~$\sigma=\sigma_R\colon R^\times \to R$ given by
$\sigma(y)=\omega(z)+y^2$ for $z:= -y^\dagger$.

We have the differential ring $R\{Y\}=R[Y, Y', Y'',\dots]$ of differential polynomials in an indeterminate $Y$ over $R$.  
We say that  
$P=P(Y)\in R\{Y\}$ has order at most~$r\in \N$ if~$P\in R[Y,Y',\dots, Y^{(r)}]$. 

For $\phi\in R^\times$ we let $R^{\phi}$ be the {\it compositional conjugate of~$R$ by~$\phi$}\/: the differential ring
with the same underlying ring as~$R$ but with derivation~$\phi^{-1}\der$ instead of $\der$. 
We then have an $R$-algebra isomorphism
$$P\mapsto P^\phi\ \colon\ R\{Y\}\to R^\phi\{Y\}$$ with $P^\phi(y)=P(y)$ for all $y\in R$;
see \cite[Sec\-tion~5.7]{ADH}.

For a field $K$ we have $K^\times=K^{\ne}$, and
a (Krull) valuation on $K$ is a surjective map 
$v\colon K^\times \to \Gamma$ onto an ordered abelian 
group $\Gamma$ (additively written) satisfying the usual laws, and extended to
$v\colon K \to \Gamma_{\infty}:=\Gamma\cup\{\infty\}$ by $v(0):=\infty$,
where the ordering on $\Gamma$ is extended to a total ordering
on $\Gamma_{\infty}$ by $\gamma<\infty$ for all 
$\gamma\in \Gamma$. 

Let $K$ be a {\em valued field\/}: a field (also denoted by $K$) together with a valuation ring~$\mathcal O$ of that field. This yields a valuation $v\colon K^\times \to \Gamma$  on the underlying field
such that $\mathcal O=\{a\in K:va\geq 0\}$ as explained in~\cite[Section~3.1]{ADH}.
We introduce various binary relations 
on the set $K$ by defining for $a,b\in K$:  
\begin{align*} a\asymp b &\ :\Leftrightarrow\ va =vb, & a\preceq b&\ :\Leftrightarrow\ va\ge vb, & a\prec b &\ :\Leftrightarrow\  va>vb,\\
a\succeq b &\ :\Leftrightarrow\ b \preceq a, &
a\succ b &\ :\Leftrightarrow\ b\prec a, & a\sim b &\ :\Leftrightarrow\ a-b\prec a.
\end{align*}
It is easy to check that if $a\sim b$, then $a, b\ne 0$, and that
$\sim$ is an equivalence relation on $K^\times$. We also let $\smallo=\{a\in K:va>0\}$ be the maximal ideal of $\mathcal{O}$, so~$\mathcal{O}/\smallo$ is the residue field
of the valued field $K$.
A convex subgroup $\Delta$ of the value group $\Gamma$ of $v$ gives rise to the
{\it $\Delta$-coarsening}\/ of the valued field $K$; see [ADH, 3.4].

\subsection*{$H$-fields and pre-$H$-fields} As in \cite{ADH},  a {\em valued differential field\/} is a valued field $K$ with residue field of characteristic zero and equipped with a derivation 
$\der\colon K \to K$. An 
 {\em ordered valued differential field\/} is a valued differential field $K$ equipped with an ordering on $K$ making $K$ an ordered field. We consider any $H$-field $K$ as an ordered valued differential field whose valuation ring is the convex hull in $K$ of its constant field $C$, in accordance with construing it as an $\mathcal{L}$-structure as specified in the introduction.
 
  A {\em pre-$H$-field\/} is by definition an ordered valued differential subfield of an $H$-field. By~\cite[Sections 10.1, 10.3, 10.5]{ADH}, an ordered valued differential field $K$
  is a pre-$H$-field iff the valuation ring $\mathcal{O}$ of $K$ is convex in $K$, $f'>0$ for all $f>\mathcal{O}$ in~$K$, and $f'\prec g^\dagger$ for all $f,g\in K^\times$ with $f\preceq 1$ and $g\prec 1$. Any Hardy field $H$ is construed as a pre-$H$-field by taking the convex hull of $\Q$ in $H$ as its valuation ring, giving rise to the so-called ``natural valuation''
  on $H$ as an ordered field. At the end of Section~9.1 in \cite{ADH} we give 
  $\Q(\sqrt{2+x^{-1}})$ as an example of a Hardy field that is not an $H$-field.
 Any ordered differential field $K$ with the trivial valuation ring $\mathcal{O}=K$ is a pre-$H$-field (so the valuation ring of a pre-$H$-field $K$ is not always the convex hull
 in $K$ of its constant field, in contrast to Hardy fields and $H$-fields). If $K$ is a pre-$H$-field whose valuation ring is nontrivial, then the valuation topology on $K$ equals its order topology, by \cite[Lemma 2.4.1]{ADH}. 

 Let $K$ be a pre-$H$-field. Then the derivation of $K$ and its valuation $v\colon K^\times \to \Gamma$ induce an operation $\psi \colon  \Gamma^{\ne} \to \Gamma$, given by
 $\psi(vf)=v(f^\dagger)$ for $f\nasymp 1$ in $K^\times$; the pair~$(\Gamma,\psi)$ is called the $H$-asymptotic couple of $K$; see \cite[Section~9.1]{ADH}.  Below we assume
some familiarity with $(\Gamma,\psi)$, and
properties of $K$ based on it, such as~$K$ having {\it asymptotic integration}\/ and~$K$ having a {\it gap}\/ \cite[Sections~9.1, 9.2]{ADH}.
The {\it flattening}\/ of $K$ is the $\Gamma^\flat$-coarsening of $K$ where $\Gamma^\flat=\{vf :\, f\in K^\times,\ f' \prec f\}$,
with associated binary relations $\asymp^\flat$, $\preceq^\flat$ etc.; see [ADH, 9.4].

\section{DIVP}\label{hf}

\noindent
In this section $K$ is a pre-$H$-field. We let $\mathcal{O}$ be its valuation ring, with maximal ideal $\smallo$, and corresponding valuation $v\colon K^\times \to \Gamma=v(K^\times)$. Let $(\Gamma,\psi)$ be its $H$-asymptotic couple, and $\Psi:=\big\{\psi(\gamma):\gamma\in\Gamma^{\ne}\big\}$. Recall that ``$K$ has DIVP'' means: for all
$P(Y)\in K\{Y\}$ and $f< g$ in $K$ with $P(f) <0 <  P(g)$ there is a~$y\in K$ such that $f < y < g$ and~$P(y)=0$. Restricting this to $P$ of order~$\le r$, where~$r\in \N$, gives the notion of~$r$-DIVP. Thus
$K$ having $0$-DIVP is equivalent to~$K$ being real closed as an
ordered field. In particular, if $K$ has $0$-DIVP, then $\Gamma=v(K^\times)$ is divisible. 
From~\cite[Section~2.4]{ADH} recall our convention that $K^>=\{a\in K:a>0\}$, and similarly with~$<$ replacing $>$.

\begin{lemma}\label{ivp1} Suppose $\Gamma\ne \{0\}$ and $K$ has 
$1$-$\operatorname{DIVP}$. Then
$\der K=K$, $(K^{>})^\dagger=(K^{<})^\dagger$ is a convex subgroup of $K$, $\Psi$ has no largest element, and $\Psi$ is convex in $\Gamma$.
\end{lemma}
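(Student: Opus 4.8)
The plan is to extract each of the four assertions from suitably chosen instances of $1$-DIVP, applied to first-order differential polynomials, using the pre-$H$-field axioms to control signs. First I would treat $\der K = K$: given $a \in K$, I want $y$ with $y' = a$. Pick $f < g$ in $K$ far apart — more precisely, choose constants (or elements) $f, g$ with $f$ very negative and $g$ very large, and consider $P(Y) := Y' - a$. Here $P(f) = f' - a$ and $P(g) = g' - a$; to make these have opposite signs I would instead use an antiderivative trick on a coarsening, or more cleanly exploit that $K$ is real closed (from $0$-DIVP, which $1$-DIVP entails) so I may pass to a convenient normal form. The honest approach: since $\Gamma \neq \{0\}$, fix $\fm \prec 1$ in $K^\times$; by real closedness and the pre-$H$-field inequality $f' \prec g^\dagger$ one shows $P(Y) = Y' - a$ takes both signs on a large enough interval because $Y'$ ranges widely as $Y$ moves between a large negative and a large positive element while $a$ is fixed — then $1$-DIVP produces the desired $y$. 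So $\der K = K$.

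Next, $(K^>)^\dagger = (K^<)^\dagger$ is a convex subgroup. It is a subgroup of $(K,+)$ because $(uv)^\dagger = u^\dagger + v^\dagger$ and $u \mapsto u^\dagger$ on $K^\times$ has image a subgroup, with $(-u)^\dagger = u^\dagger$, giving $(K^>)^\dagger = (K^\times)^\dagger$. For convexity: suppose $0 \leq b \leq u^\dagger$ with $u > 0$; I want $b = y^\dagger$ for some $y > 0$, equivalently $y'/y = b$, i.e. solve a logarithmic-derivative equation. Apply $1$-DIVP to $P(Y) := Y' - bY$ on an interval in $K^>$: since $b \leq u^\dagger = u'/u$ we have $u' - bu \geq 0$, so $P(u) \geq 0$; and taking $Y$ small positive (some $w \in \smallo^{>}$, using small-ish behavior from the pre-$H$-field axiom $f' \prec g^\dagger$) makes $P(w) = w' - bw$; here $w' \prec w^\dagger \cdot w$... — the sign bookkeeping is exactly the delicate point. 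The intended conclusion is that $w' - bw < 0$ for suitable small $w > 0$, whence $1$-DIVP gives $y \in (w, u)$ with $y' = by$, i.e. $b = y^\dagger \in (K^>)^\dagger$. Convexity of $(K^>)^\dagger$ follows.

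For the last two assertions I would derive them from the second. Since $\Psi = \psi(\Gamma^{\ne})$ and, for $f \nasymp 1$, $\psi(vf) = v(f^\dagger)$, the set $\Psi$ is (up to sign/valuation) the image of the nonzero part of $K^\times$ under $v \circ {}^\dagger$; convexity of the subgroup $(K^\times)^\dagger$ of $K$ translates, via the compatibility of $v$ with the ordering on a pre-$H$-field, into convexity of $\Psi$ in $\Gamma$ — here I would invoke the standard dictionary between convex subsets of $K$ and convex subsets of $\Gamma$ from \cite[Section~9.1]{ADH}, together with the fact that $\der K = K$ gives asymptotic integration, so $\Gamma$ has no gap and $\Psi$ cannot jump. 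That $\Psi$ has no largest element: if $\psi(\gamma_0) = \max \Psi$ for some $\gamma_0 = vf_0$, then $f_0^\dagger$ would be a maximal element of $(K^\times)^\dagger$ up to asymptotic equivalence, contradicting that this is a subgroup closed under adding small elements (or directly: one produces $g$ with $v(g^\dagger) > \psi(\gamma_0)$ by solving $g^\dagger = $ (something smaller) via $\der K = K$ and real closedness).

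**Main obstacle.** The hard part will be the sign analysis in the second assertion — choosing the small positive test element $w$ and verifying $w' - bw < 0$ — since it is precisely here that the pre-$H$-field axiom $f' \prec g^\dagger$ and the hypothesis $\Gamma \neq \{0\}$ must be used in tandem; once the logarithmic-derivative equation $y^\dagger = b$ is solved by $1$-DIVP, the group and convexity statements, and then the two $\Psi$-statements, follow by the asymptotic-couple formalism of \cite{ADH} with only routine work.
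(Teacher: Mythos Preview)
Your plan for $\der K=K$ and for the convexity of $(K^{>})^\dagger$ is essentially right, but the ``main obstacle'' you flag is self-inflicted. The paper avoids your auxiliary small element $w$ entirely: given $a,b\in K^{>}$ and any $s$ strictly between $a^\dagger$ and $b^\dagger$, apply $1$-DIVP to $P(Y)=sY-Y'$ on the interval with endpoints $a,b$. Since $P(a)=a(s-a^\dagger)$ and $P(b)=b(s-b^\dagger)$ have opposite signs, you get $y>0$ between $a$ and $b$ with $s=y^\dagger$. This handles arbitrary $s$ between two logarithmic derivatives in one stroke, with no case analysis on $\operatorname{sign}(s)$ and no need to manufacture a small test point.

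The genuine gap is in your argument that $\Psi$ has no maximum. Your first suggestion---that a maximal-valuation $f_0^\dagger$ contradicts $(K^{>})^\dagger$ being a convex subgroup---does not yield a contradiction: the convex subgroup also contains all $h^\dagger$ with $h\asymp 1$, and those have valuation strictly above $\Psi$, so nothing prevents $\Psi$ itself from having a top. Your second suggestion, solving $g^\dagger=c$ for some $c$ with $vc>\max\Psi$, does produce $g$ via the convexity just established, but you have no control over whether $g\asymp 1$; if it is, then $v(g^\dagger)\notin\Psi$ and you have gained nothing. The paper's argument sidesteps this by integrating rather than exponentiating: for any $\beta\in\Psi$, use $\der K=K$ to take $a$ with $v(a')=\beta$; the pre-$H$-field axiom $f\preceq 1\Rightarrow v(f')>\Psi$ forces $a\succ 1$, and then $\beta=va+\psi(va)$ with $va<0$ gives $\psi(va)=\beta-va>\beta$, an element of $\Psi$ strictly above $\beta$.

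Your route to convexity of $\Psi$ via convexity of $(K^{>})^\dagger$ can be completed (one must check that if $c=h^\dagger$ with $vc$ below some element of $\Psi$ then $h\not\asymp 1$), but the paper instead gets it immediately from ``$\Psi$ has no maximum'' and standard asymptotic-couple facts.
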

\begin{proof} We have $y'=0$ for $y=0$, and $y'$ takes arbitrarily large positive values in~$K$ as $y$ ranges over
$K^{>\mathcal{O}}=\{a\in K:a>\mathcal O\}$, since by \cite[Lemma~9.2.6]{ADH} the set
$(\Gamma^{<})'$ is coinitial in~$\Gamma$. Hence
$y'$ takes all positive values on $K^{>}$, and therefore also all negative values on~$K^{<}$. Thus $\der K=K$. Next, let $a,b\in K^{>}$, and suppose
$s\in K$ lies strictly between
$a^\dagger$ and $b^\dagger$. Then $s=y^\dagger$ for some $y\in K^{>}$
strictly between $a$ and $b$; this follows by noting that for
$y=a$ and $y=b$ the signs of
$sy-y'$ are opposite.

Let $\beta\in\Psi$ 
and take $a\in K$ with $v(a')=\beta$. Then $a\succ 1$, since
$a\preceq 1$ would give~$v(a')>\Psi$. Hence for $\alpha=va<0$
we have $\alpha+\alpha^\dagger=\beta$, so $\alpha^\dagger>\beta$.
Thus $\Psi$ has no largest element. 
Therefore the set $\Psi$ is convex in $\Gamma$.  \end{proof}  

\noindent
Thus the ordered differential field $\T_{\log}$ of   logarithmic transseries~\cite[Appendix~A]{ADH} does not have $1$-DIVP, although it is a newtonian 
$\upo$-free $H$-field.

Does DIVP imply that $K$ is an $H$-field? No: take an $\aleph_0$-saturated elementary
extension of $\T$ and let $\Delta$ be as in \cite[Example~10.1.7]{ADH}. Then
the $\Delta$-coarsening of~$K$ is a pre-$H$-field with DIVP and nontrivial value group, and has a gap, but it is not an $H$-field.
On the other hand:

\begin{lemma}\label{ivp2} Suppose $K$ has $1$-$\operatorname{DIVP}$ and has no gap. Then $K$ is an $H$-field.
\end{lemma}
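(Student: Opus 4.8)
The plan is to show that $\mathcal{O} = C + \smallo$, which is the only axiom of $H$-fields not automatic for a pre-$H$-field once we know the other properties hold. Since $K$ has $1$-DIVP it certainly has $0$-DIVP, so $K$ is real closed; and since it has no gap and (by Lemma~\ref{ivp1}) $\Psi$ has no largest element, $K$ has asymptotic integration, i.e. $\der K^\times \supseteq \smallo$ in a suitably strong sense — in fact Lemma~\ref{ivp1} already gives $\der K = K$. The residue field $\boldsymbol{k} = \mathcal{O}/\smallo$ is an ordered field, and it carries an induced derivation precisely because $K$ has small derivation on $\smallo$ — wait, a pre-$H$-field need not have small derivation, so I should instead argue directly. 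The key point is: I must show every element of $\mathcal{O}$ is infinitely close to a constant.

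First I would fix $a \in \mathcal{O}$ and consider its residue $\bar a \in \boldsymbol{k}$. The strategy is to produce $c \in C$ with $a - c \prec 1$. Consider the differential polynomial $P(Y) = Y' $ together with shifts; more to the point, for the equation $Y - a = 0$ this is trivial, so the real content is getting a \emph{constant} close to $a$. I would use that $\boldsymbol{k}$, being the residue field of a pre-$H$-field with no gap, ought to coincide with the residue field of the constant field — this is where I'd invoke the structure theory: in a pre-$H$-field, $C$ injects into $\boldsymbol{k}$, and the obstruction to surjectivity is governed by the asymptotic couple. Concretely, given $a \in \mathcal{O} \setminus \smallo$, if no constant is asymptotic to $a$ then $a - c \asymp 1$ for all $c \in C$ lying below $\bar a$, and one shows that then $a^\dagger$, or rather the function $y \mapsto y^\dagger$ applied near $a$, detects a gap-like behavior. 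The cleanest route: use $1$-DIVP on $P(Y) = Y'$ to solve $y' = s$ for prescribed small $s$ with prescribed value, hence show $\smallo \subseteq \der\smallo$ is \emph{not} what we need; instead solve, for a putative non-constant-approximable $a$, an equation forcing a contradiction with ``no gap.''

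Let me restructure. The honest plan: by Lemma~\ref{ivp1}, $(K^{>})^\dagger = (K^{<})^\dagger =: \Delta$ is a convex subgroup of $K$, $\Psi$ is convex with no largest element, and $\der K = K$. Since $K$ has no gap, the standard dichotomy for asymptotic couples (from \cite[Section~9.2]{ADH}) says $K$ either has asymptotic integration or has a gap; having excluded the gap, $K$ has asymptotic integration, and combined with $\Psi$ having no largest element this is exactly the condition that $\Psi < (\Gamma^{>})'$ fails to have a max, etc. Now to get $\mathcal{O} = C + \smallo$: take $a \asymp 1$ in $K$; I want $c \in C$ with $a \sim c$ in the sense $a - c \prec 1$. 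Apply $1$-DIVP to $P(Y) = Y'$: the hypothesis of Lemma~\ref{ivp1}'s proof shows $y \mapsto y^\dagger$ is surjective from $K^{>}$ onto $\Delta$ with an intermediate-value behavior. Consider instead $P(Y) = Y' - \epsilon$ for small $\epsilon$; since $\der K = K$ there is $b$ with $b' = a'$, and then $a - b \in C$; but I need $b \prec 1$ or $b \asymp 1$ small enough. Here is the crux: $a' \in ?$ — since $a \asymp 1$, we have $a' \preceq a^\dagger \cdot$ something; in a pre-$H$-field $a \asymp 1 \Rightarrow a' \prec 1$ is \emph{not} automatic without small derivation, so this genuinely uses the no-gap hypothesis to control $v(a')$.

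The main obstacle, then, is precisely this: showing that for $a \asymp 1$ one has $a' \prec g^\dagger$ for all $g \prec 1$ is part of the pre-$H$-field axioms, but upgrading to $\mathcal{O} = C + \smallo$ requires showing the residue field is ``all constants,'' and the engine must be $1$-DIVP producing solutions to $y' = a'$ with $y \preceq a$, i.e. an integration step that stays in $\smallo + C$. I expect the proof to run: (i) reduce to finding $c \in C$ with $a - c \in \smallo$ for given $a \in \mathcal{O}$; (ii) WLOG $a \notin \smallo$, so $v(a) = 0$; (iii) use $\der K = K$ to write $a = c + b$ with $c \in C$, $b \in \der^{-1}(\{a'\})$ chosen via $1$-DIVP to satisfy $b \prec 1$, the IVP being applied to $Y' - a'$ between a negative and positive value — this works because no gap forces $v(a') \in (\Gamma^{>})'$ or $a' = 0$, so $a'$ has an integral in $\smallo$; (iv) conclude $\mathcal{O} = C + \smallo$, hence $K$ is an $H$-field. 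I would expect step (iii), pinning down that $a'$ has a small integral using exactly ``no gap,'' to be where all the work concentrates, with the asymptotic-couple bookkeeping from \cite[Section~9.2]{ADH} doing the heavy lifting.
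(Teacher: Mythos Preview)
Your plan is correct and is essentially the paper's approach, with a minor framing difference: you go directly for $\mathcal{O} = C + \smallo$ by showing $a' \in \der\smallo$ for each $a \asymp 1$, while the paper proves the (formally stronger) statement $\text{I}(K) = \der\smallo$ and relies on \cite[Section~11.8]{ADH} for why this makes $K$ an $H$-field. Since $a' \in \text{I}(K)$ whenever $a \in \mathcal{O}$, the actual computation is identical in both framings.

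The ``asymptotic-couple bookkeeping'' you defer in step~(iii) is precisely what the paper spells out, and it is short. Given $g < 0$ with $vg > \Psi$ (your $g = a'$): since $(\Gamma^{>})'$ has no least element one can choose positive $f \in \smallo$ with $f' \succ g$, whence $f' < g$ because $f' < 0$; since $(\Gamma^{>})'$ is cofinal in $\Gamma$ one can choose positive $h \in \smallo$ with $h' \prec g$, whence $g < h'$. Then $1$-DIVP applied to $P(Y) = Y' - g$ yields a zero strictly between $f$ and $h$, hence in $\smallo$. Your observation that ``no gap forces $v(a') \in (\Gamma^{>})'$'' is exactly what guarantees such $f$ and $h$ exist, so your sketch was pointing at the right mechanism.
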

\begin{proof} In \cite[Section~11.8]{ADH} we defined
$$\text{I}(K)\ :=\  \{y\in K:\, \text{$y\preceq f'$ for some $f\in \mathcal{O}$}\},$$
a convex $\mathcal{O}$-submodule of $K$. Since $K$ has no gap, we have
$$\der\smallo\ \subseteq\ \text{I}(K)\ =\ \{y\in K:\, \text{$y\preceq f'$ for some $f\in \smallo$}\}.$$
Also $\Gamma\ne \{0\}$, and so $(\Gamma,\psi)$ has asymptotic integration by Lemma~\ref{ivp1}. We show that $K$ is an $H$-field
by proving $\text{I}(K) =\der\smallo$, so let $g\in \text{I}(K)$, $g<0$. Since $(\Gamma^{>})'$ has no least element we can take
positive $f\in \smallo$ such that $f'\succ g$. Since
$f'<0$, this gives $f' < g$. Since $(\Gamma^{>})'$ is cofinal in 
$\Gamma$ we can also take positive $h\in \smallo$ such that~$h'\prec g$, which in view of $h'<0$ gives $g < h'$.
Thus $f' < g < h'$, and so $1$-DIVP yields $a\in \smallo$ with $g=a'$. 
\end{proof}

\noindent
We refer to Sections~11.6 and~14.2 of \cite{ADH} for the definitions of {\it $\upl$-freeness}\/ and {\it $r$-newtonianity}\/ ($r\in\N$). From the introduction we recall that
$\omega(z):=-2z'-z^2$. Below, compositionally conjugating an $H$-field $K$ means replacing it by some $K^\phi$ with $\phi\in K^{>}$; this preserves most relevant properties like being an $H$-field, being $\upl$-free, $r$-DIVP, and 
$r$-newtonianity, and it replaces $\Psi$ by $\Psi-v\phi$. 

\begin{lemma}\label{ivp3} Suppose $K$ is an $H$-field, $\Gamma\ne \{0\}$, and $K$ has 
$1$-$\operatorname{DIVP}$. Then $K$ is $\upl$-free and $1$-newtonian, and the subset $\omega(K)$ of $K$ is downward closed.  
\end{lemma}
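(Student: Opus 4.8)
The plan is to combine the structural consequences of Lemma~\ref{ivp1} with several applications of $1$-$\operatorname{DIVP}$ to differential polynomials of order~$\le 1$. Since $K$ is an $H$-field with $\Gamma\ne\{0\}$ and $1$-$\operatorname{DIVP}$, Lemma~\ref{ivp1} gives that $\der K=K$ (so $K$ has asymptotic integration), that $\Psi$ has no largest element and is convex in $\Gamma$, and that $(K^{>})^{\dagger}=(K^{<})^{\dagger}$ is a convex subgroup of $K$; also $K$ is real closed, by $0$-$\operatorname{DIVP}$. Compositionally conjugating $K$ by some $\phi\in K^{>}$ preserves all of these properties together with $1$-$\operatorname{DIVP}$, $\upl$-freeness, $1$-newtonianity, and downward-closedness of $\omega(K)$, so we may pass to a compositional conjugate whenever convenient.

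The easiest conclusion is that $\omega(K)$ is downward closed. Let $a=\omega(z_{0})\in\omega(K)$ and $b<a$ in $K$, and put $Q:=\omega(Y)-b\in K\{Y\}$; it has order $1$ and $Q(z_{0})=a-b>0$. It suffices to find $z_{1}\in K$ with $\omega(z_{1})<b$, since then $Q(z_{1})<0$ and $1$-$\operatorname{DIVP}$, applied to $Q$ or to $-Q$ on the interval with endpoints $z_{0},z_{1}$, yields $z$ with $\omega(z)=b$. To get $z_{1}$: if $-b\le c$ for some $c>0$ in $C$ (in particular if $b\ge 0$), then for $d>0$ in $C$ with $d^{2}>c$ we have $\omega(d)=-d^{2}<-c\le b$; and if $-b\succ 1$, then $z_{1}:=\sqrt{-b}+1\in K$ (real closedness) is positive and infinitely large, hence $z_{1}>\mathcal{O}$, hence $z_{1}'>0$ by the $H$-field axiom, so $\omega(z_{1})=-z_{1}^{2}-2z_{1}'<-z_{1}^{2}<b$.

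For $\upl$-freeness: since $K$ has asymptotic integration and $\Psi$ has no largest element, the $\lambda$-sequence $(\lambda_{\rho})$ of $K$ is defined \cite[Section~11.6]{ADH}, and $K$ is $\upl$-free iff $(\lambda_{\rho})$ has no pseudolimit in $K$. Suppose, toward a contradiction, that $\lambda\in K$ is such a pseudolimit. The $\lambda_{\rho}$ arise as sign-adjusted logarithmic derivatives of a cofinal family of active elements and are linked to the map $\omega$ through $\sigma$, with $\sigma(y)=\omega(-y^{\dagger})+y^{2}$; the plan is to use the existence of $\lambda$, together with $1$-$\operatorname{DIVP}$ applied to an appropriate order-$1$ differential polynomial built from $\omega$ and $\sigma$ and the convexity of $(K^{>})^{\dagger}$, to solve an order-$1$ equation in $K$ whose solvability contradicts the structure of $(\Gamma,\psi)$ recorded in Lemma~\ref{ivp1}. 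I expect this to be the genuinely delicate step: identifying the right order-$1$ differential polynomial, checking its sign change while staying in order $1$ (we do not yet have DIVP of higher order), and combining this correctly with those convexity facts is the heart of the argument.

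Finally, granted $\upl$-freeness, $1$-newtonianity follows along standard lines with the Newton-polynomial machinery of \cite[Chapters~11, 13, 14]{ADH}: one reduces to $P\in K\{Y\}$ of order $\le 1$ of Newton degree $1$, compositionally conjugates by a suitable active $\phi$ so that the dominant, linear part of $P$ dictates the sign of $P$ on $\smallo$, concludes that $P$ changes sign on $\smallo$, and invokes $1$-$\operatorname{DIVP}$ to produce a zero of $P$ in $\smallo$. By contrast with the $\upl$-freeness step, this is essentially bookkeeping once the structure theory is in place, and the $\omega(K)$ argument is elementary.
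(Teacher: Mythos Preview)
Your argument for downward closure of $\omega(K)$ is correct and essentially the same as the paper's, just written out in more detail (the paper simply observes $\omega(z)\to -\infty$ as $z\to\pm\infty$).

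The genuine gap is $\upl$-freeness: you correctly identify it as the delicate step but do not prove it, and the route you propose is unlikely to work. You suggest building an order-$1$ differential polynomial ``from $\omega$ and $\sigma$'', but $\sigma(y)=\omega(-y^\dagger)+y^2$ involves $y''$ and is of order~$2$ in~$y$, so you cannot invoke $1$-DIVP on anything coming from~$\sigma$. The paper's argument is different and more concrete: after compositionally conjugating to small derivation, take $x\succ 1$ with $x'=1$; a pseudolimit $\upl$ of $(\upl_\rho)$ gives $s:=-\upl\sim -x^{-1}$, and by \cite[11.5.14]{ADH} this $s$ creates a gap over~$K$. Now set $P:=Y'+sY$. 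Then $P(0)=0$ and $P(x^2)=2x+sx^2\sim x>0$, so $1$-DIVP applied to $P-1$ produces $y\in K$ with $y'+sy=1$, contradicting \cite[11.5.12]{ADH}. The key idea you are missing is this specific \emph{linear} first-order polynomial together with the ``$s$ creates a gap'' obstruction.

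Your outline for $1$-newtonianity is along the right lines but underspecified. Two points matter in the execution: first, one uses $\upl$-freeness via \cite[13.3.6]{ADH} (after passing to an elementary extension and compositionally conjugating) to write $P=D+R$ with $D\in\{cY+d,\ cY'\}$, $c\in C^\times$, and $R\prec^\flat 1$; second, the two cases are handled differently. For $D=cY+d$ one finds the sign change among constants; for $D=cY'$ one does \emph{not} get a sign change on all of $\smallo$ from the linear part alone---the paper picks $t\in\smallo^{\ne}$ with $t'\asymp t^2$ and checks $P(t)$, $P(-t)$ have opposite signs. Your phrase ``$P$ changes sign on $\smallo$'' hides this asymmetry.
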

\begin{proof} Note that $K$ has (asymptotic) integration, by Lemma~\ref{ivp1}. 
Assume towards a contradiction that $K$ is
not $\upl$-free. We arrange by compositional conjugation that~$K$ has small derivation, so $K$ has an element $x\succ 1$ with $x'=1$, hence $x> C$. A construction in the beginning of \cite[Section 11.5]{ADH} yields by \cite[Lemma 11.5.2]{ADH} a pseudocauchy sequence 
$(\upl_{\rho})$ in $K$ with certain properties including $\upl_{\rho}\sim x^{-1}$ for all $\rho$. 
As $K$ is not $\upl$-free, $(\upl_{\rho})$ has a pseudolimit $\upl\in K$ by \cite[Corollary~11.6.1]{ADH}. Then $s:= -\upl\sim -x^{-1}$, and
$s$ creates a gap
over~$K$ by \cite[Lemma~11.5.14]{ADH}. Now note that for
$P:= Y'+sY$ we have $P(0)=0$ and $P(x^2)=2x+sx^2\sim x$, so
by $1$-DIVP we have $P(y)=1$ for some $y\in K$, contradicting
\cite[Lemma~11.5.12]{ADH}. 

Let $P\in K\{Y\}$ of order at most~$1$ have Newton degree~$1$; we have to show that $P$ has a zero in $\mathcal{O}$. We know that $K$ is $\upl$-free, so by \cite[Proposition~13.3.6]{ADH} we
can pass to an elementary extension, 
compositionally conjugate, and divide by an element of $K^\times$ to arrange that $K$ has small derivation and
$P=D+R$ where $D=cY+d$ or~$D=cY'$ with $c,d\in C$, $c\ne 0$, and where $R\prec^{\flat} 1$. Then $R(a)\prec^{\flat} 1$ for all $a\in \mathcal{O}$. If $D= cY+d$, then we can take $a,b\in C$ with $D(a)<0$ and $D(b)>0$, which in view of $R(a)\prec D(a)$ and
$R(b) \prec D(b)$ gives $P(a)<0$ and $P(b)>0$, and so
$P$ has a zero strictly between $a$ and $b$, and thus a zero in $\mathcal{O}$. Next, suppose $D=cY'$. Then we take $t\in \smallo^{\ne}$ with $v(t^\dagger)=v(t)$, that is, $t'\asymp t^2$, so
$$P(t)\ =\ ct'+R(t), \quad P(-t)\ =\ -ct'+ R(-t), \qquad 
R(t),\ R(-t)\ \prec\ t'.$$
Hence $P(t)$ and $P(-t)$ have opposite signs, so
$P$ has a zero strictly between $t$ and~$-t$, and thus $P$
has a zero in $\mathcal{O}$.

From $\omega(z)=-z^2-2z'$ we see that $\omega(z) \to -\infty$ as
$z\to +\infty$ and as $z\to -\infty$ in~$K$, so $\omega(K)$ is downward closed by $1$-IVP.  
\end{proof} 

\noindent
For results involving $r$-DIVP for $r\ge 2$ we need a variant of \cite[Lemma~11.8.31]{ADH}. To state this variant we introduce as in~\cite[Section~11.8]{ADH} the sets
$$\Upg(K)\ :=\ \{ a^\dagger:\, a\in K\setminus\mathcal O\}\ \subseteq\ K^>, \qquad \Upl(K)\ :=\ -\Upg(K)^\dagger\ \subseteq\ K.$$
The superscripts~$\uparrow$,~$\downarrow$ used in the statement of Lemma~\ref{ivp4-} below indicate upward, respectively downward, closure
in the ordered set $K$, as in~\cite[Section~2.1]{ADH}.

\begin{lemma}\label{ivp4-} Let $K$ be an $H$-field with asymptotic integration. Then 
$$K^{>}\ =\ \operatorname{I}(K)^{>}\cup\Upg(K)^{\uparrow}, \qquad \sigma\big(K^{>}\setminus \Upg(K)^{\uparrow}\big)\ \subseteq\ \omega\big(\Upl(K)\big){}^{\downarrow}.$$ 
\end{lemma}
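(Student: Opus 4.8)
The plan is to establish the displayed equality first, and then to derive the stated inclusion from it, using the identity $\sigma(u)=\omega(-u^\dagger)+u^2$ (for $u\in K^\times$, which unwinds to $\sigma(u)=2(u^\dagger)'-(u^\dagger)^2+u^2$) together with the elementary calculus of the $H$-asymptotic couple $(\Gamma,\psi)$.

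For the equality $K^{>}=\operatorname{I}(K)^{>}\cup\Upg(K)^{\uparrow}$, the inclusion ``$\supseteq$'' is immediate: $\operatorname{I}(K)^{>}\subseteq K^{>}$, and $\Upg(K)\subseteq K^{>}$ because for $a\in K\setminus\mathcal{O}$ we have $\abs{a}\succ 1$, hence $\abs{a}>C$, hence $\abs{a}'>0$ and $a^\dagger=\abs{a}^\dagger=\abs{a}'/\abs{a}>0$; so also $\Upg(K)^{\uparrow}\subseteq K^{>}$. For ``$\subseteq$'', let $s\in K^{>}$ with $s\notin\operatorname{I}(K)$ and use asymptotic integration to pick $a\in K^{\times}$ with $a'\asymp s$. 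Then $a\notin\mathcal{O}$, since $a\in\mathcal{O}$ would give $s\preceq a'$ and hence $s\in\operatorname{I}(K)$; replacing $a$ by $-a$ if necessary we may assume $a>0$, so that $a^\dagger\in\Upg(K)$. From $va<0$ we get $v(a^\dagger)=v(a')-va>v(a')=vs$, that is $0<a^\dagger\prec s$, whence $0<a^\dagger<s$ and $s\in\Upg(K)^{\uparrow}$.

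For the inclusion $\sigma\bigl(K^{>}\setminus\Upg(K)^{\uparrow}\bigr)\subseteq\omega\bigl(\Upl(K)\bigr)^{\downarrow}$, let $y\in K^{>}\setminus\Upg(K)^{\uparrow}$; by the equality just proved, $y\in\operatorname{I}(K)^{>}$, so $vy>\Psi$, and $y<g$ for every $g\in\Upg(K)$. Using asymptotic integration, I would choose $a\in K\setminus\mathcal{O}$ with $a>0$ so that $a^\dagger\in\Upg(K)$ has $v(a^\dagger)=v(y^\dagger)$ — when $vy\ne 0$ this is arranged by taking $va=-vy$, since then $v(a^\dagger)=\psi(va)=\psi(vy)=v(y^\dagger)$ — and set $\lambda:=-(a^\dagger)^\dagger\in\Upl(K)$. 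Applying the $\sigma$-identity to $u=a^\dagger$ gives $\omega(\lambda)=\sigma(a^\dagger)-(a^\dagger)^2=2(a^{\dagger\dagger})'-(a^{\dagger\dagger})^2$, whereas $\sigma(y)=2(y^\dagger)'-(y^\dagger)^2+y^2$; hence the desired inequality $\sigma(y)\le\omega(\lambda)$ amounts to a comparison, in $(\Gamma,\psi)$, of the monomials $2(y^\dagger)'$, $(y^\dagger)^2$, $y^2$ on the left against $2(a^{\dagger\dagger})'$, $(a^{\dagger\dagger})^2$ on the right. Since $vy>\Psi\ni v(y^\dagger)$, the contribution of $y^2$ is negligible, and which of the two remaining monomials dominates on each side depends only on the position of $vy$ relative to $\Psi$ and to the fibres of $\psi$ (equivalently, on whether $\psi(\psi(vy))$ exceeds $\psi(vy)$). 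A short case analysis along these lines then shows that in each case the surviving dominant monomial of $\omega(\lambda)$ has the right sign and dominates $\sigma(y)$. It may be convenient to pass first to small derivation by compositional conjugation, which alters $\omega$, $\sigma$, $\operatorname{I}(K)$, $\Upg(K)$, $\Upl(K)$ only in the way recorded in \cite[Section~11.8]{ADH}.

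I expect this monomial comparison to be the main obstacle. The inequality $\sigma(y)\le\omega(\lambda)$ cannot be read off valuations alone: on each side two monomials of equal valuation but opposite sign may occur — this already happens for $y\asymp x^{-2}$ in $\T$, where the two leading terms of $\sigma(y)$ cancel — so one must track which monomial in fact dominates, including in the degenerate configurations. This bookkeeping is exactly what is carried out in the proof of \cite[Lemma~11.8.31]{ADH}, which I would follow here, with the hypothesis of $\upl$-freeness relaxed to asymptotic integration.
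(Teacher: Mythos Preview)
Your argument for the equality $K^{>}=\operatorname{I}(K)^{>}\cup\Upg(K)^{\uparrow}$ is correct and essentially the paper's, just more explicit.

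For the inclusion, however, your plan has a genuine gap at precisely the point you flag. The proof of \cite[Lemma~11.8.31]{ADH} is \emph{not} a monomial-by-monomial sign analysis that would survive replacing an exact antiderivative by a valuation-matched surrogate. It rests on the exact relation $s=(1/a)'$ with $a\succ 1$: writing $g:=a^\dagger$ and $\lambda:=-g^\dagger\in\Upl$, one gets $-s^\dagger=g-g^\dagger$, and in expanding $\omega(g-g^\dagger)$ the cross term $2g\,g^\dagger=2g'$ cancels $-2g'$, leaving the clean identity
\[
\sigma(s)\;=\;\omega(\lambda)\;-\;g^{2}\bigl(1-a^{-2}\bigr)\;<\;\omega(\lambda).
\]
With your choice of $a$ (only $va=-vy$, hence only $v(a^\dagger)=v(y^\dagger)$) there is no such cancellation: $\sigma(y)-\omega(\lambda)$ becomes $2\bigl((y^\dagger)'-(g^\dagger)'\bigr)-\bigl((y^\dagger)^{2}-(g^\dagger)^{2}\bigr)+y^{2}$, with $v(y^\dagger)=\psi(vy)$ typically different from $v(g^\dagger)=\psi(\psi(vy))$, and no mechanism forces a definite sign. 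So ``follow 11.8.31 with the hypothesis relaxed to asymptotic integration'' does not go through: the hypothesis you are relaxing is exactly what supplies the antiderivative on which the identity depends.

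The paper's remedy is to manufacture that antiderivative. Given $s\in K^{>}\setminus\Upg(K)^{\uparrow}\subseteq\operatorname{I}(K)^{>}$, it invokes \cite[10.2.7, 10.5.8]{ADH} to pass to an \emph{immediate} $H$-field extension $L\supseteq K$ containing some $a\succ 1$ with $s=(1/a)'$. The proof of \cite[11.8.31]{ADH} now applies verbatim in $L$ and yields $\sigma(s)\le\omega(\lambda)$ for some $\lambda\in\Upl(L)$. One then descends to $K$: $\omega$ is increasing on $\Upl(L)$ (remark before \cite[11.8.21]{ADH}) and $\Upl(K)$ is cofinal in $\Upl(L)$ by \cite[11.8.14]{ADH}, so some $\lambda'\in\Upl(K)$ has $\omega(\lambda')\ge\omega(\lambda)\ge\sigma(s)$. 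This bypasses the case analysis entirely; the immediate-extension step is the idea your outline is missing.
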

\begin{proof} If $a\in K$, $a > \text{I}(K)$, then $a\ge b^\dagger$ for some $b\in K^{\succ 1}$, and thus $a\in \Upg(K)^{\uparrow}$. Next, let $s\in K^{>}\setminus\Upg(K)^{\uparrow}$;
we have to show $\sigma(s)\in \omega\big(\Upl(K)\big){}^{\downarrow}$. Note that $s\in \operatorname{I}(K)^{>}$ by what we just proved. From~\cite[10.2.7 and  10.5.8]{ADH} we obtain an immediate $H$-field extension $L$ of $K$ and $a\in L^{\succ 1}$ with $s=(1/a)'$. As in the proof 
of~\cite[11.8.31]{ADH} with $L$ instead of $K$ this gives $\sigma(s)\in \omega\big(\Upl(L)\big){}^{\downarrow}$, where $\downarrow$ indicates here the downward closure in $L$. 
It remains to note that $\omega$ is increasing on $\Upl(L)$ by the remark preceding~\cite[11.8.21]{ADH},
and that $\Upl(K)$ is cofinal in $\Upl(L)$ by~\cite[11.8.14]{ADH}. 
\end{proof} 

\noindent
The concept of \emph{$\upo$-freeness}\/ is introduced in \cite[Section~11.7]{ADH}.  
If $K$ has asymptotic integration, then by~\cite[11.8.30]{ADH}: $K$ is $\upo$-free\  
$\Leftrightarrow\ K=\omega\big(\Upl(K)\big){}^\downarrow\cup \sigma\big(\Upg(K)\big){}^\uparrow$. 

The next lemma also mentions the differential field extension $K[\imag]$ of $K$
where $\imag^2=-1$, as well as linear differential operators over  differential fields like $K$ and~$K[\imag]$; for this we refer to \cite[Sections 5.1, 5.2]{ADH}. 

\begin{lemma}\label{ivp4} Suppose $K$ is an $H$-field, $\Gamma\ne \{0\}$, $r\ge 2$, and $K$ has $r$-$\operatorname{DIVP}$. Then the following hold, with {\rm{(i), (ii), (iii)}} using only the case $r=2$: 
\begin{enumerate}
\item[(i)]
$K=\omega(K)\cup \sigma(K^{>})=\omega\big(\Upl(K)\big){}^\downarrow\cup \sigma\big(\Upg(K)\big){}^\uparrow$;
\item[(ii)] $K$ is $\upo$-free and $\omega(K)=\omega\big(\Upl(K)\big){}^\downarrow$;
\item[(iii)]  for all $a\in K$ the operator $\der^2-a$ splits over $K[\imag]$;
\item[(iv)] $K$ is $r$-newtonian.
\end{enumerate}
\end{lemma}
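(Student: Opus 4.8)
The plan is to prove the first equality in~(i) in the sharper form $K=\sigma(K^{>})$, to deduce~(iii) from this by an algebraic translation, to obtain the second equality in~(i) and with it~(ii), and finally to deduce~(iv). Since $r$-$\operatorname{DIVP}$ implies $1$-$\operatorname{DIVP}$, Lemmas~\ref{ivp1} and~\ref{ivp3} are available throughout: $K$ has asymptotic integration, is $\upl$-free and $1$-newtonian, and $\omega(K)$ is downward closed. I shall also use that $\Upg(K)$ is unbounded above in $K$: by Lemma~\ref{ivp1}, $(K^{>})^{\dagger}$ is a convex subgroup of the ordered additive group of $K$, and it is nonzero since $\Gamma\ne\{0\}$ rules out $K=C$, hence it is unbounded above, which forces $\Upg(K)$ to be unbounded above (the remaining logarithmic derivatives of positive elements being bounded above); alternatively this is immediate from Lemma~\ref{ivp4-}. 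I also freely use the basic fact $w^{\dagger}\prec w$ for $w\succ 1$.

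For~(i), fix $a\in K$. Writing $z:=-Y^{\dagger}$ and using $\sigma(Y)=\omega(z)+Y^{2}=2Y''/Y-3(Y'/Y)^{2}+Y^{2}$, I clear denominators to get the differential polynomial
\[P\ :=\ 2YY''-3(Y')^{2}+Y^{4}-aY^{2}\ \in\ K\{Y\}\]
of order $\le 2$, with $P(0)=0$ and $P(y)=y^{2}\big(\sigma(y)-a\big)$ for $y\in K^{\times}$. First choose $g\in K^{>}$ with $g\succ 1$ and $vg$ negative enough that $g^{2}$ dominates both $a$ and $\omega(-g^{\dagger})$; then $\sigma(g)-a\sim g^{2}$, so $P(g)>0$. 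Next, using that $\Upg(K)$ is unbounded above, choose $u\in K^{>}$ with $u\succ 1$, $u^{\dagger}\succ 1$ and $(u^{\dagger})^{2}\succ a$, and put $f:=1/u$; then $f\in K^{>}$, $f\prec 1$, $-f^{\dagger}=u^{\dagger}$, and since $w^{\dagger}\prec w$ for $w\succ 1$ gives $(u^{\dagger})'\prec(u^{\dagger})^{2}$ we obtain $\sigma(f)=\omega(u^{\dagger})+f^{2}\sim-(u^{\dagger})^{2}$, so $\sigma(f)<a$ and $P(f)<0$. As $f<g$, applying $2$-$\operatorname{DIVP}$ to $P$ yields $y$ with $f<y<g$ and $P(y)=0$; since $0<f$ we have $y\in K^{\times}$, hence $\sigma(y)=a$, so $a\in\sigma(K^{>})$. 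Thus $K=\sigma(K^{>})\subseteq\omega(K)\cup\sigma(K^{>})\subseteq K$.

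For~(iii) and~(ii): the operator $\der^{2}-a$ splits over $K[\imag]$ iff the Riccati equation $\phi'+\phi^{2}=a$ has a solution $\phi=u+\imag v\in K[\imag]$ with $u,v\in K$, i.e.\ iff the system $v'+2uv=0$, $u'+u^{2}-v^{2}=a$ is solvable in $K$. If $v=0$ this reads $-4a=-4u'-4u^{2}=\omega(2u)\in\omega(K)$; if $v\ne 0$ it forces $u=-\tfrac12 v^{\dagger}$, and writing $y:=2v\in K^{\times}$, $z:=-y^{\dagger}=2u$ one gets $\sigma(y)=\omega(z)+y^{2}=-4\big(u'+u^{2}-v^{2}\big)=-4a$. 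Hence $\der^{2}-a$ splits over $K[\imag]$ iff $-4a\in\omega(K)\cup\sigma(K^{>})$, which by~(i) holds for all $a\in K$; this is~(iii). For~(ii): combining the first equality in~(i) with Lemma~\ref{ivp4-} ($K^{>}=\operatorname{I}(K)^{>}\cup\Upg(K)^{\uparrow}$ and $\sigma\big(K^{>}\setminus\Upg(K)^{\uparrow}\big)\subseteq\omega\big(\Upl(K)\big){}^{\downarrow}$), with the downward closedness of $\omega(K)$ (Lemma~\ref{ivp3}), and with the monotonicity of $\omega$ on $\Upl(K)$ and of $\sigma$ on $\Upg(K)$ (\cite[\S11.8]{ADH}), one obtains the second equality in~(i), namely $K=\omega\big(\Upl(K)\big){}^{\downarrow}\cup\sigma\big(\Upg(K)\big){}^{\uparrow}$; by \cite[11.8.30]{ADH} this is equivalent to $\upo$-freeness, and the remaining identity $\omega(K)=\omega\big(\Upl(K)\big){}^{\downarrow}$ of~(ii) then follows from $\upo$-freeness together with downward closedness of $\omega(K)$, by \cite[\S11.8]{ADH}.

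The hard part is~(iv). By Lemma~\ref{ivp3} and parts~(ii),~(iii), $K$ is a $\upl$-free, $\upo$-free $H$-field with asymptotic integration which is $1$-newtonian and in which every operator $\der^{2}-a$ ($a\in K$) splits over $K[\imag]$. To pass to $r$-newtonianity I would follow \cite[Chapters~13, 14]{ADH}: given $P\in K\{Y\}$ of order $\le r$ and Newton degree $1$, arrange by compositional conjugation, passage to a suitable elementary extension, and multiplication by an element of $K^{\times}$ — all preserving $r$-$\operatorname{DIVP}$, the property of being an $H$-field, $\upl$- and $\upo$-freeness, and the splitting property — that $P=D+R$ with $D$ a nonzero linear differential operator of order $\le r$ in normal form and $R\prec^{\flat}1$; then the splitting of the $\der^{2}-a$ forces $D$ to factor over $K[\imag]$ into operators of order $1$, and unravelling this factorization while using $r$-$\operatorname{DIVP}$ to solve the resulting equations should produce a zero of $P$ in $\mathcal O$. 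I expect the main obstacle to be exactly this last step: checking that the successive reductions preserve $r$-$\operatorname{DIVP}$ and that a split linear dominant part of Newton degree $1$ really does force a sign change of $P$. Alternatively one may invoke directly the relevant characterization of $r$-newtonianity for $\upl$-free $H$-fields in \cite[Chapter~14]{ADH}.
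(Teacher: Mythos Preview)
Your treatment of (i)--(iii) is correct and in fact slightly sharper than the paper's: you show $K=\sigma(K^{>})$ outright by exhibiting, for every $a\in K$, elements $0<f<g$ with $P(f)<0<P(g)$, whereas the paper first compositionally conjugates to arrange $0\in\Psi$ and a specific $a\prec 1$ with $a^\dagger=-1$, only handles $f>0$ via $P(a)<0$, and covers $f\le 0$ separately through $\omega(0)=0$. Your construction of $f=1/u$ with $u^\dagger\succ 1$ is clean; for $g$ it is worth saying explicitly that one may take $g\succ 1$ with $g^\dagger\succ 1$ (possible since $\Upg(K)$ is unbounded), so that $(g^\dagger)'\prec (g^\dagger)^2\prec g^2$ follows from two applications of $w^\dagger\prec w$ and hence $\omega(-g^\dagger)\prec g^2$ is immediate. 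Your derivation of (iii) from (i) is just the content of \cite[(5.2.1)]{ADH} written out, and your route to the second equality in (i) and to (ii) via Lemma~\ref{ivp4-} matches the paper.

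The gap is in (iv). Your plan is to reduce to $P=D+R$ with $D$ linear of order $\le r$, factor $D$ over $K[\imag]$ using (iii), and then somehow parlay the factorization into a sign change of $P$; you yourself flag this last step as the obstacle, and indeed it is not clear how a first-order splitting of $D$ over $K[\imag]$ produces real $f,g\in\mathcal O$ with $P(f)<0<P(g)$ once $R$ is present. The paper bypasses all of this. The point is that \cite[Proposition~13.3.13]{ADH}, which requires $\upo$-freeness (now available by (ii)) rather than merely $\upl$-freeness, gives for \emph{any} order the very same normal form as in Lemma~\ref{ivp3}: after passing to an elementary extension, compositionally conjugating, and dividing by a unit, one has $P=D+R$ with $D=cY+d$ or $D=cY'$, $c,d\in C$, $c\ne 0$, and $R\prec^\flat 1$. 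No higher-order linear part survives, so no factoring is needed: the sign-change argument of Lemma~\ref{ivp3} (using $a,b\in C$ in the first case and $\pm t$ with $t'\asymp t^2$ in the second) applies verbatim, with $r$-DIVP replacing $1$-DIVP.
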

\begin{proof} For (i) we use the end of~\cite[Section~11.7]{ADH} to replace~$K$ with a compositional conjugate so that $0\in \Psi$. Then $K$ has small derivation,
 and we have $a\in K^{>}$ such that $a\nasymp 1$ and $a^\dagger\asymp 1$. Replacing $a$ by $a^{-1}$ if necessary this gives
  $a^\dagger=-\phi$ with~$\phi\asymp 1$, $\phi>0$, so $a\prec 1$. Then $\phi^{-1}a^\dagger=-1$; replacing $K$ by $K^\phi$ and renaming the latter as $K$ this means  $a^\dagger=-1$. Let $f\in K$; to get~$f\in \omega\big(\Upl(K)\big){}^\downarrow\cup \sigma\big(\Upg(K)\big){}^\uparrow$, note first that $1= (1/a)^\dagger\in \Upg(K)$, so 
  $0\in \Upl(K)$. Also $\omega\big(\Upl(K)\big){}^\downarrow\subseteq \omega(K)$ by Lemma~\ref{ivp3}.

If $f\le 0$, then $\omega(0)=0$ gives $f\in \omega\big(\Upl(K)\big){}^\downarrow$.
So assume~$f> 0$;
we first show that then~$f\in \sigma(K^{>})$. Now  for $y\in K^{>}$, $f=\sigma(y)$ is equivalent (by multiplying with $y^2$) to $P(y)=0$, where $$P(Y)\ :=\ 2YY'' -3(Y')^2+Y^4-fY^2\in K\{Y\}.$$
See also \cite[Section~13.7]{ADH}. We have $P(0)=0$ and $P(y) \to +\infty$ as $y\to +\infty$ (because of the term $y^4$). In view of $2$-DIVP it will suffice to show that
for some $y>0$ in $K$ we have $P(y)<0$. Now with
$y\in K^{>}$ and $z:=-y^\dagger$ we have
\begin{align*} P(y)\ &=\ y^2\big(\sigma(y)-f\big)\ =\ y^2\big(\omega(z)+y^2-f\big),\text{ hence}\\ 
P(a)\ &=\ a^2\big(\omega(1)+a^2-f\big)\ =\ a^2(-1+a^2-f)\ <\ 0,
\end{align*}
so $f\in \sigma(K^{>})$. By the second inclusion of Lemma~\ref{ivp4-}
this yields $f\in \omega\big(\Upl(K)\big){}^\downarrow$ or 
$f\in \sigma\big(\Upg(K)^\uparrow\big)$. But we have $\sigma\big(\Upg(K)^\uparrow\big)\subseteq \sigma\big(\Upg(K)\big){}^\uparrow$, because $\sigma$ is increasing on~$\Upg(K)^\uparrow$ by the remark preceding~\cite[11.8.30]{ADH}. This concludes the proof of (i), and then
(ii) follows, using for its second part also the fact  stated just before 
\cite[11.8.29]{ADH} that we have $\omega(K) < \sigma\big(\Upg(K)\big)$. 

Now (iii) follows from $K=\omega(K)\cup \sigma(K^{>})$ by \cite[Section 5.2, (5.2.1)]{ADH}. As to (iv), 
let $P\in K\{Y\}$ of order at most $r$ have Newton degree $1$; we have to show that $P$ has a zero in $\mathcal{O}$.  
For this we repeat the argument in the proof of Lemma~\ref{ivp3} 
so that it applies to our~$P$, using $\upo$-freeness instead of $\upl$-freeness, \cite[Proposition~13.3.13]{ADH} instead of \cite[Proposition~13.3.6]{ADH}, and $r$-DIVP instead of $1$-DIVP. 
\end{proof}

\begin{cor}\label{LIVP} If $K$ is an $H$-field, $\Gamma\ne \{0\}$, and $K$ has 
$\operatorname{DIVP}$, then $K$ is $\upo$-free and newtonian. 
\end{cor}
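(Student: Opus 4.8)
The plan is to deduce Corollary~\ref{LIVP} from the preceding lemmas by a clean case split on whether $\Gamma$ carries a nontrivial $\psi$-structure, i.e.\ whether $K$ has asymptotic integration or a gap, and then reduce $\upo$-freeness and newtonianity to the finitary statements already proved (Lemmas~\ref{ivp1}--\ref{ivp4}). Since $K$ has DIVP, it has $r$-DIVP for every $r$, and in particular $1$-DIVP and $r$-DIVP for all $r\ge 2$. Because $\Gamma\ne\{0\}$, Lemma~\ref{ivp1} applies and tells us $\der K=K$, $\Psi$ has no largest element, and $\Psi$ is convex in $\Gamma$; this is exactly the dichotomy ``$K$ has asymptotic integration or $K$ has a gap''.

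\medskip
First I would dispose of the gap case. Here the point is that DIVP is strong enough to \emph{rule out} a gap once $K$ is known to be an $H$-field, but a priori $K$ need not be an $H$-field (the coarsening example in the text shows DIVP alone does not force it). So the argument must go: by Lemma~\ref{ivp1}, if $K$ has a gap then $\Psi$ has no largest element yet $\Gamma\setminus\Psi$ has a least element — but wait, a gap is compatible with Lemma~\ref{ivp1}, so instead I invoke Lemma~\ref{ivp2}: $1$-DIVP together with ``no gap'' gives that $K$ is an $H$-field. Thus the correct structure is to treat the ``no gap'' (i.e.\ asymptotic integration) case directly, and separately note that the statement's hypothesis ``$K$ is an $H$-field'' is already assumed in Corollary~\ref{LIVP} — rereading the statement, $K$ is given to be an $H$-field, so Lemma~\ref{ivp2} is not needed and I should simply observe that an $H$-field with $\Gamma\ne\{0\}$ cannot have a gap (an $H$-field has $\operatorname{I}(K)=\der\smallo$-type behaviour; more simply, by \cite[9.2.19 or thereabouts]{ADH} an $H$-field with nontrivial $\Gamma$ has asymptotic integration). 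Hence $K$ has asymptotic integration outright.

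\medskip
With $K$ an $H$-field with asymptotic integration in hand, $\upo$-freeness is immediate from Lemma~\ref{ivp4}(ii): apply $r$-DIVP with $r=2$. For newtonianity I would argue that $K$ is $r$-newtonian for every $r\in\N$. For $r=0$, $0$-newtonianity is automatic (Newton degree $0$ has no content, or follows from real closedness via $0$-DIVP); for $r=1$, Lemma~\ref{ivp3} gives $1$-newtonianity from $1$-DIVP (using that $K$ is an $H$-field with $\Gamma\ne\{0\}$, hence has integration); and for $r\ge 2$, Lemma~\ref{ivp4}(iv) gives $r$-newtonianity from $r$-DIVP. Since ``newtonian'' means ``$r$-newtonian for all $r$'', and $K$ has $r$-DIVP for all $r$ (as $K$ has full DIVP), we conclude $K$ is newtonian. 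Combining, $K$ is $\upo$-free and newtonian.

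\medskip
\textbf{The main obstacle} I anticipate is purely bookkeeping rather than conceptual: making sure the hypotheses of Lemmas~\ref{ivp3} and~\ref{ivp4} are literally met — each requires $K$ to be an $H$-field with $\Gamma\ne\{0\}$, which is given, but Lemma~\ref{ivp4} also silently uses that $K$ has asymptotic integration (invoked via Lemma~\ref{ivp4-} inside its proof), so I must first record that an $H$-field with nontrivial value group and $1$-DIVP has asymptotic integration — this follows from Lemma~\ref{ivp1} (no largest element of $\Psi$, convexity of $\Psi$) together with the fact that in an $H$-field the only alternative to asymptotic integration is a gap, and a gap in an $H$-field with $\Gamma \neq \{0\}$ is excluded by \cite[9.2.16]{ADH} (an $H$-field with nontrivial $\Gamma$ either has a gap, has asymptotic integration, or is grounded; grounded is excluded since $\Psi$ has no largest element by Lemma~\ref{ivp1}, and a gap plus $1$-DIVP is excluded because... ) — here one must be slightly careful, and the cleanest route is simply: by Lemma~\ref{ivp1}, $\Psi$ has no largest element, so $K$ is not grounded; if $K$ had a gap it would still satisfy Lemma~\ref{ivp1}, but then Lemma~\ref{ivp2}'s contrapositive does not directly apply; instead I use that $K$ being an $H$-field with a gap contradicts $\der K = K$ from Lemma~\ref{ivp1} via \cite[Lemma 11.5.12]{ADH}-type reasoning, or most efficiently cite \cite[9.2.19]{ADH} that an ungrounded $H$-field with nontrivial $\Gamma$ and $\der K = K$ has asymptotic integration. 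Once that one line is in place, everything else is a direct appeal to the four lemmas, and the proof is three or four sentences long.
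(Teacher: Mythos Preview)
Your proposal is correct and arrives at the same argument the paper has in mind: the corollary is an immediate consequence of Lemma~\ref{ivp4}(ii) for $\upo$-freeness and Lemma~\ref{ivp4}(iv) (together with Lemma~\ref{ivp3} for $r=1$, or simply the observation that $r$-newtonian implies $r'$-newtonian for $r'\le r$) for newtonianity. Indeed the paper gives no proof at all for this corollary.

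However, the entire discussion under ``the main obstacle'' is unnecessary and reflects a misreading of the lemmas' hypotheses. Lemma~\ref{ivp3} requires only that $K$ be an $H$-field with $\Gamma\ne\{0\}$ having $1$-DIVP; Lemma~\ref{ivp4} requires only that $K$ be an $H$-field with $\Gamma\ne\{0\}$ having $r$-DIVP for some $r\ge 2$. All of these are given outright by the corollary's hypotheses (DIVP implies $r$-DIVP for every $r$). Asymptotic integration is \emph{not} a hypothesis of either lemma; it is established \emph{inside} the proof of Lemma~\ref{ivp3} (``Note that $K$ has (asymptotic) integration, by Lemma~\ref{ivp1}'') and is likewise available inside the proof of Lemma~\ref{ivp4}. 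You do not need to re-derive it, worry about gaps, or invoke Lemma~\ref{ivp2} at the level of the corollary. The proof really is two sentences.
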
 

\noindent
There are non-Liouville closed $H$-fields with nontrivial derivation that have~$\operatorname{DIVP}$; see \cite[Section~14]{AvdD2}. By Lemma~\ref{ivp3} and Lemma~\ref{ivp4}(iii), Liouville closed $H$-fields having $2$-$\operatorname{DIVP}$ are {\it Schwarz closed}\/
as defined in~\cite[Sec\-tion~11.8]{ADH}.

\begin{theorem}\label{DIVP} Let $K$ be a Liouville closed $H$-field. Then 
$$ \text{$K$   has $\operatorname{DIVP}$}\ \Longleftrightarrow\ \text{$K$  is $\upo$-free and newtonian.}$$
\end{theorem}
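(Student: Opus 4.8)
\textbf{Proof proposal for Theorem~\ref{DIVP}.}

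The plan is to establish the two directions separately, with the bulk of the work already packaged in the lemmas and corollaries above. For the implication ``$\upo$-free and newtonian $\Rightarrow$ DIVP'', I would simply invoke Corollary~\ref{DIVP-}, which gives this for \emph{every} newtonian $\upo$-free Liouville closed $H$-field (not just those with small derivation), via compositional conjugation to reduce to the small-derivation case and then the completeness result from~\cite{ADH} together with the DIVP for $\T_{\operatorname{g}}$ from~\cite{JvdH}. So this direction is immediate and requires no further argument here.

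For the converse, ``DIVP $\Rightarrow$ $\upo$-free and newtonian'', the natural move is to reduce to Corollary~\ref{LIVP}. Corollary~\ref{LIVP} already gives exactly this conclusion under the extra hypothesis $\Gamma\ne\{0\}$, so the only thing to check is that a Liouville closed $H$-field $K$ with DIVP cannot have trivial value group. If $\Gamma=\{0\}$, then $\mathcal{O}=K$ and $\smallo=0$, so $C=K$ (using $\mathcal{O}=C+\smallo$), forcing the derivation of $K$ to be trivial; but a Liouville closed $H$-field is by definition real closed with $\der K=K$ via $f=g'$, and if $\der=0$ then $\der K=\{0\}\ne K$ unless $K=\{0\}$, a contradiction. (Equivalently: Liouville closedness demands, for any $f\ne 0$, some $h\in K^\times$ with $h^\dagger=f$, which is impossible when $\der=0$.) Hence $\Gamma\ne\{0\}$, and Corollary~\ref{LIVP} applies verbatim to yield that $K$ is $\upo$-free and newtonian.

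I do not anticipate a genuine obstacle here: the theorem is essentially a repackaging of Corollary~\ref{DIVP-} and Corollary~\ref{LIVP}, and the entire content of the ``hard'' direction has been absorbed into Lemmas~\ref{ivp1}--\ref{ivp4} and the references to~\cite{ADH}. The one point deserving a sentence of care is the ruling-out of $\Gamma=\{0\}$, since Corollary~\ref{LIVP} is stated with that hypothesis; but as noted this is forced by Liouville closedness (a nontrivial derivation is built into the definition). If one wanted to be scrupulous, one could also remark that $0$-DIVP (a consequence of DIVP) already gives that $K$ is real closed, which is part of being Liouville closed, so no circularity arises. Thus the proof is two short paragraphs: cite Corollary~\ref{DIVP-} for ``$\Leftarrow$'', and for ``$\Rightarrow$'' observe $\Gamma\ne\{0\}$ and apply Corollary~\ref{LIVP}.
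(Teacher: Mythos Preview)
Your proposal is correct and matches the paper's proof exactly: the paper's argument is simply ``The forward direction is part of Corollary~\ref{LIVP}. The backward direction is Corollary~\ref{DIVP-}.'' You are in fact slightly more careful than the paper, which leaves the verification of $\Gamma\ne\{0\}$ for a Liouville closed $H$-field implicit; your observation that Liouville closedness forces a nontrivial derivation (hence $K\ne C$, hence $\Gamma\ne\{0\}$) fills that small gap.
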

\begin{proof} 
The forward direction is part of Corollary~\ref{LIVP}. The backward direction is Corollary~\ref{DIVP-}. 
\end{proof}



\end{document}